\theoremstyle{plain}
\newtheorem{theorem}{Theorem}
\newtheorem{prop}[theorem]{Proposition}
\newtheorem{lemma}[theorem]{Lemma}
\newtheorem{coro}[theorem]{Corollary}
\theoremstyle{definition}
\newtheorem{remark}{Remark}
\newcommand{\ts}{\hspace{0.5pt}}
\newcommand{\nts}{\hspace{-0.5pt}}
\newcommand{\CC}{\mathbb{C}\ts}
\newcommand{\RR}{\mathbb{R}\ts}
\newcommand{\ZZ}{\mathbb{Z}}
\newcommand{\SSS}{\mathbb{S}}
\newcommand{\NN}{\mathbb{N}}
\newcommand{\XX}{\mathbb{X}}
\newcommand{\EE}{\mathbb{E}}
\newcommand{\eone}{\boldsymbol{e}^{}_{1}}
\newcommand{\etwo}{\boldsymbol{e}^{}_{2}}
\newcommand{\dd}{\, \mathrm{d}}
\begin{document}

\title[Pure Lebesgue diffraction spectrum]
{Planar dynamical systems with \\[2mm]
pure Lebesgue diffraction spectrum}

\author{Michael Baake}
\address{Fakult\"at f\"ur Mathematik, Universit\"at Bielefeld, \newline
\hspace*{\parindent}Postfach 100131, 33501 Bielefeld, Germany}
\email{mbaake@math.uni-bielefeld.de}

\author{Tom Ward}
\address{School of Mathematics, University of East Anglia, \newline
\hspace*{\parindent}Norwich NR4 7TJ, UK}
\email{t.ward@uea.ac.uk}

\begin{abstract}
  We examine the diffraction properties of lattice
dynamical systems of algebraic origin. It is well-known
that diverse dynamical properties occur
within this class. These include different orders of
mixing (or higher-order correlations), the presence
or absence of measure rigidity (restrictions on the
set of possible shift-invariant ergodic measures to
being those of algebraic origin), and different
entropy ranks (which may be viewed as the maximal
spatial dimension in which the system resembles an i.i.d.\
process). Despite these differences, it is shown that
the resulting diffraction spectra are essentially
indistinguishable, thus raising further difficulties
for the inverse problem of structure determination
from diffraction spectra. Some of them may be resolved
on the level of higher-order correlation functions,
which we also briefly compare.

\end{abstract}

\maketitle

\section{Introduction}

The diffraction measure is a characteristic attribute of a
translation bounded measure $\omega$ on Euclidean space (or on
any locally compact Abelian group). It emerges as the Fourier
transform $\widehat{\gamma}$ of the autocorrelation $\gamma$ of
$\omega$, and has important applications in crystallography,
because it describes the outcome of kinematic diffraction (from
X-rays or neutron scattering, say \cite{Cowley}). In recent
years, initiated by the discovery of quasicrystals (which are
non-periodic but nevertheless show pure Bragg diffraction), a
systematic study by many people has produced a reasonably
satisfactory understanding of the class of measures with a pure
point diffraction measure, meaning that $\widehat{\gamma}$ is a
pure point measure, without any continuous component; see
\cite{BLM,BM,Hof,Martin} and references therein for details.

Clearly, reality is more complicated than that, in the sense that real
world structures will (and do) show substantial continuous components
in addition to the point part in their diffraction
measure. Unfortunately, the methods around dynamical systems (in
particular, the relations between dynamical and diffraction spectra)
that are used to establish pure point spectra~\cite{BL,BLM,Martin} and
to explore some of the consequences~\cite{Lenz} do not seem to extend
to the treatment of systems with mixed spectrum~\cite{ME}, at least
not in sufficient generality.

More recently, systems with continuous spectral components have
been investigated also from a rigorous mathematical point of
view (see~\cite{BBM} for a survey and references), with a
number of unexpected results. In particular, the phenomenon of
homometry (meaning the existence of different measures with the
same autocorrelation) becomes more subtle. For example, on the
basis of the autocorrelation alone, it is not possible to
distinguish the Bernoulli comb (with completely positive
entropy) from the deterministic Rudin--Shapiro comb (with zero
entropy)~\cite{BG,HB}.

The purpose of this article is to show that, in higher
dimensions, new phenomena emerge. In particular, one can have
lattice systems of lower entropy rank that are again homometric
to the Bernoulli system (of full entropy rank).  Here, the
entropy rank means the largest dimension for which the marginal
of the invariant measure used has positive entropy.
Paradigmatic is the Ledrappier shift, which will be one of our
main examples, though we also discuss various extensions.  We
sum up with a sketch of a collection of systems with diverse
dynamical properties that all have pure Lebesgue diffraction
spectrum, which raises a number of interesting questions to
follow up.

\section{Ledrappier's shift}\label{sec:Ledrappier}

Let $x,y,z$ denote elements of $\ZZ^{2}$, and $u,v,w$ elements
of a subshift, which itself might be a group (for instance, $w$
will often refer to a generic element in the Ledrappier system
below). Let $\eone$ and $\etwo$ denote the standard horizontal
and vertical unit vectors in the Euclidean plane, and define
\begin{equation}\label{eq:def-ledrap}
   \XX_{\mathrm{L}} = \bigl\{ w= (w_{x})^{}_{x\in\ZZ^{2}}
      \in\{\pm1\}^{\ZZ^{2}} \mid w_{x} w_{x+\eone}
      w_{x+\etwo} = 1\mbox{ for all } x \in\mathbb Z^2 \bigr\} \ts .
\end{equation}
This defines a (closed) subshift, but also a closed Abelian
subgroup (under pointwise multiplication) of the compact group
$\{\pm 1 \}^{\ZZ^{2}}$. The topology that~$\XX_{\mathrm{L}}$
inherits is the product topology, in which two points (or
configurations) are close if they agree on a large
neighbourhood of~$0\in\ZZ^2$. It is written multiplicatively
here (in contrast to the additive notation in~\cite{L}) for
reasons that will become clear shortly. We equip
$\XX_{\mathrm{L}}$ with its Haar measure $\mu^{}_{\mathrm{L}}$,
and denote the $\ZZ^{2}$ shift action by $\alpha$, meaning that
\[
     \bigl( \alpha^{z}(w) \bigr)_{x}  =  \, w_{x+z} \ts ,
     \qquad \mbox{for all $x,z \in \ZZ^{2}$ and
     $w \in \XX_{\mathrm{L}}$.}
\]
This action is continuous (that is, for any~$z\in\ZZ^2$, the
map~$\alpha^{z}$ is a homeomorphism) since, for any
fixed~$z\in\ZZ^{2}$, nearby configurations are shifted to
nearby configurations by~$\alpha^z$. In this way, one
obtains~$(\XX_{\mathrm{L}}, \ZZ^{2})$ as a topological
dynamical system.

Let us now take the measure $\mu^{}_{\mathrm{L}}$ into account (a
remarkable feature of this system is that it has many invariant
measures -- see~\cite{E} for the details; we will only be interested
in the unique translation-invariant measure). The
space~$\XX_{\mathrm{L}}$ certainly contains some element~$v$ with
value~$-1$ at~$0$ (meaning that~$v^{}_{0}=-1$). The group structure
of~$\XX_{\mathrm{L}}$ then implies that
\[
   \mu^{}_{\mathrm{L}} \{ w \mid w^{}_{0} = 1 \} \, = \,
   \mu^{}_{\mathrm{L}} \bigl( v \cdot
   \{ w \mid w^{}_{0} = 1 \} \bigr) \, = \,
    \mu^{}_{\mathrm{L}} \{ w \mid w^{}_{0} = -1 \} \ts ,
\]
since $ \mu^{}_{\mathrm{L}}$ is the Haar measure, and hence
invariant under multiplication by the element $v$. On the other
hand, the whole group $\XX_{\mathrm{L}}$ is the disjoint union
of the two cylinder sets $\{ w \mid w^{}_{0} = 1 \}$ and
$\{ w \mid w^{}_{0} = - 1 \}$, so that
\begin{equation} \label{eq:L-onepoint}
   \mu^{}_{\mathrm{L}} \{ w \mid w^{}_{0} = 1 \} \, = \,
   \mu^{}_{\mathrm{L}} \{ w \mid w^{}_{0} = -1 \} \, = \,
   \frac{1}{2} \ts .
\end{equation}
Similarly, one finds for the cylinder sets defined by the
values~$\varepsilon$ at~$0$ and~$\varepsilon'$ at~$x\ne 0$ that
\begin{equation} \label{eq:L-twopoint}
     \mu^{}_{\mathrm{L}} \bigl\{ w \mid w^{}_{0} = \varepsilon
     \ts , \, w^{}_{x} = \varepsilon'  \ts \bigr\}
     \, = \,  \frac{1}{4}\ts ,
\end{equation}
for any choice of $\varepsilon,\varepsilon' \in \{\pm 1 \}$. To see
this, one observes that $\XX_{\mathrm{L}}$ certainly contains some
element $u$ that takes the values $1$ at $0$ and $-1$ at $x$, and
another, $v$ say, with the values at $0$ and $x$ interchanged. Using
the group structure of~$\XX_{\mathrm{L}}$ again, one can conclude that
the four cylinder sets given by the choices for~$\varepsilon$
and~$\varepsilon'$ must have the same measure.

The shift action $\alpha$ preserves the Haar measure
$\mu_{\mathrm{L}}$, and the~$\ZZ^2$ measure-preserving
dynamical system~$(\XX_{\mathrm{L}},\mu_{\mathrm{L}})$ was
introduced by Ledrappier \cite{L} to show that the (in general
still open) Rokhlin problem, which asks if a mixing
measure-preserving~$\ZZ$-action must be mixing of all orders,
has a negative answer for~$\ZZ^2$. A discussion of this system
in the context of Gibbs measures and extremality can be found
in~\cite{Sl}. The system is easily shown to be mixing, and to
have countable (dynamical) Lebesgue spectrum, but the following
argument shows that it is \emph{not} mixing of all orders.  The
relation~\eqref{eq:def-ledrap} propagates to show that
\[
     w^{}_{x}\, w^{}_{x+2^n \eone} w^{}_{x+2^n \etwo}=1
\]
holds for all~$n\ge 0$. Writing $A=\{w\mid w^{}_0=1\}$, it follows
that
\[
   \mu_{\mathrm{L}}  \left(  A\cap\alpha^{(-2^n,0)}
      A\cap\alpha^{(0,-2^n)}A \right)=\frac{1}{4}
\]
for all~$n\ge0$, so that $\alpha$ is not mixing on triples of
sets (or is not~$3$-mixing). Thus, as a two-dimensional system,
there are correlations between arbitrarily distant triples of
coordinates. In contrast to this, the one-dimensional
subsystems (that is, the measure-preserving
transformation~$\alpha^z$ for any fixed~$z\in\ZZ^2$) are all as
chaotic as possible:\ each map~$\alpha^z$ is measurably
isomorphic to a Bernoulli shift (that is, to an i.i.d.\
process).  This contrast between the properties of lower
dimensional subsystems and the whole system is a characteristic
feature of symbolic dynamics in higher dimensions.

\section{Autocorrelation and diffraction of ergodic subshifts}

Since all initial measures that appear in this article are
supported on the (planar) integer lattice, we formulate the
main concepts and results for this (slightly simpler) case; for
the general theory, we refer to~\cite{BM,Hof,Martin}. Let~$\XX
\subset \{ \pm 1\}^{\ZZ^{2}}$ be a subshift (meaning a
shift-invariant subset that is closed in the product topology)
with an ergodic invariant probability measure $\mu$ (more
generally, we also consider any closed, shift-invariant subset
of $(\SSS^{1})^{\ZZ^{2}}$, where $\SSS^{1} = \{z\in \CC \mid
\lvert z \rvert = 1 \}$).  Associate to each element $w\in\XX$
a \emph{Dirac comb} via
\begin{equation} \label{eq:def-comb}
    \omega \, = \, \omega_{w} \, := \sum_{x\in\ZZ^{2}}
              \! w_x \ts \delta_x \ts ,
\end{equation}
which is a translation bounded (and hence locally finite) measure on
$\ZZ^{2}$. At the same time, it can be considered as a measure on
$\RR^{2}$ with support in $\ZZ^{2}$. We will use both pictures in
parallel below, where measures (and their limits) are always looked at
in the vague topology. We will formulate the diffraction spectrum
starting from individual Dirac combs attached to elements of the shift
space, rather than via the invariant measure on $\XX$. This is
motivated by the physical process of diffraction, which deals with a
realisation of the process and not with the entire ensemble. For
ergodic systems (such as those considered here), this makes (almost)
no difference due to the ergodic theorem.

Let~$C_{N}=[-N,N]^2$ be the closed, centred square of side
length $2N$, and denote the restriction of $\omega$ to
$C^{}_{N}$ by $\omega^{}_{N}$.  The \emph{natural
autocorrelation measure}, or autocorrelation for short, is
defined as the vague limit
\begin{equation} \label{eq:def-auto}
     \gamma \, = \, \gamma^{}_{\omega} \, = \,
     \omega \circledast \widetilde{\omega}
      \,  :=  \lim_{N\to\infty}  \nts \frac{\omega^{}_{N} \nts *
      \widetilde{\omega^{}_{N}}}{\mathrm{vol} (C_N)}\ts ,
\end{equation}
provided that the limit exists, where $\widetilde{\nu}$ is the
`flipped-over' measure defined by $\widetilde{\nu} (g) = \overline{\nu
(\widetilde{g}\ts)}$ for arbitrary continuous functions $g$ of compact
support, and $\widetilde{g} (x) = \overline{g(-x)}$. Here and below,
we use $\circledast$ for the volume weighted (or Eberlein)
convolution.

When $\omega$ is a translation bounded measure, there is always at
least one accumulation point, by \cite[Prop.~2.2]{Hof}. Much of what
we say below remains valid for each such accumulation point
individually. Here, we will only consider situations where the limit
exists, at least almost surely in the probabilistic sense. If so, the
autocorrelation is, by construction, a positive definite measure, and
hence Fourier transformable \cite[Sec.~I.4]{BF}. The result, the
\emph{diffraction measure} $\widehat{\gamma}$, is a translation
bounded, positive measure on $\RR^{2}$ (by the Bochner-Schwartz theorem
\cite[Thm.~4.7]{BF}) with a unique Lebesgue decomposition
\begin{equation} \label{eq:decomp}
    \widehat{\gamma} \, = \,
    \bigl( \widehat{\gamma}\bigr)_{\mathsf{pp}} + \ts
    \bigl( \widehat{\gamma}\bigr)_{\mathsf{sc}} + \ts
    \bigl( \widehat{\gamma}\bigr)_{\mathsf{ac}}
\end{equation}
into its pure point, singular continuous and absolutely continuous
parts.  The measure $\widehat{\gamma}$ describes the outcome of
kinematic diffraction in crystallography and materials science
\cite{Cowley,Hof}. The pure point and absolutely continuous parts are
often referred to as Bragg spectrum and diffuse scattering,
respectively.

It is perhaps interesting to note that also singular continuous
components show up quite frequently, see \cite{Wit} for an overview,
and indicate subtle aspects of long-range order that are far from
being well understood.  The mathematical paradigm in one spatial
dimension is the classic Thue-Morse system, which was first analysed
by Wiener and Mahler, see \cite{HB,BG-TM} and references therein for
details.

However, $\widehat{\gamma}$ is also an interesting mathematical object
in its own right. It has only recently been properly appreciated in
the mathematical community, due to its connection with dynamical
systems theory; see \cite{Hof,Martin,BL, BLM} and references therein for
more.

Let us return to the autocorrelation $\gamma$ of a measure $\omega$
that corresponds to a point $w\in \XX_{\mathrm{L}}$ as
in~\eqref{eq:def-comb}. If it exists, a simple calculation shows that
it must be a pure point measure of the form
\begin{equation} \label{eq:auto-form}
    \gamma \, = \sum_{z\in \ZZ^{2}} \nts\nts  \eta(z) \ts \delta_{z}\ts ,
\end{equation}
where (with the cube $C_n$ from above) the \emph{autocorrelation
  coefficients} $\eta (z)$ are given by
\begin{equation} \label{eq:def-eta}
    \eta(z) \,  = \lim_{N\to\infty}
    \frac{1}{\lvert \ZZ^2 \cap C_{N} \rvert} \!
       \sum_{x\in \ZZ^{2}\cap C_{N}}
     \! \! \! \overline{w_{x}} \ts w_{x+z} \ts ,
\end{equation}
with $\lvert A \rvert$ denoting the cardinality of a (finite) set $A$.
When one considers $\eta$ as a function on $\ZZ^{2}$, it is again
positive definite.  There are several distinct ways to write $\eta(z)$
as a limit, which differ by `boundary terms' that vanish in the limit
as $N\to\infty$. We have chosen the most convenient for our purposes;
see the detailed discussion in \cite{Hof, Martin} for more on
this. For lattice systems, it is clear that the existence of the limit
in \eqref{eq:def-auto} is equivalent to the (simultaneous) existence
of the limits in \eqref{eq:def-eta} for all $z\in\ZZ^{2}$. We will use
this equivalence many times below without further notice; see
\cite[Thm.~4]{Lenz} for a direct formulation in a more general
setting.

A closer look at \eqref{eq:def-eta} reveals that $\eta(z)$ (which depends
on $w$) can be seen
as the orbit average of the function $h_{z} \! : \, \XX
\longrightarrow \CC$ defined by $h_{z} (w) = \overline{w^{}_{0}} \ts
w^{}_{z}$, which is continuous (and hence measurable) on
$\XX$.  In the ergodic case, we thus have
\begin{equation} \label{eq:eta-ergodic}
   \eta(z) \, = \, \mu (h_{z} ) \, = \int_{\XX} h_{z} (v)
   \dd \mu (v) \ts , \qquad \mbox{for a.e.\ $w\in\XX$,}
\end{equation}
by an application of Birkhoff's ergodic theorem; see \cite{W} or
\cite[Thm.~2.1.5]{K} for a formulation of the case at hand (with
$\ZZ^{2}$-action). Since $\ZZ^{2}$ is countable, we get the almost
sure existence of all coefficients $\eta(z)$, and hence the almost
sure existence of the autocorrelation $\gamma$ in this case.

Before we continue, let us briefly explain how to treat Dirac combs for
ergodic shifts with more general weights. When we use the (possibly
complex) numbers $\phi_{\pm}$ instead of $\pm 1$, the new Dirac comb
attached to $w\in\XX$ is
\[
     \omega^{}_{\phi} =
     \frac{\phi_{+} + \phi_{-}}{2}\, \delta^{}_{\ZZ^{2}} +
     \frac{\phi_{+} - \phi_{-}}{2}\, \omega \ts ,
\]
with $\omega$ as in \eqref{eq:def-comb}. Let us assume that the
original weights are balanced, in the sense that the cylinder sets
$\{w\mid w^{}_{0}=1 \}$ and $\{w\mid w^{}_{0}= -1 \}$ have the same
measure (as is the case for Ledrappier's shift). Ergodicity then
tells us that, for almost all $w\in \XX$,
\[
    \omega \circledast \delta^{}_{\ZZ^{2}} \, = \,
    \delta^{}_{\ZZ^{2}} \circledast \widetilde{\omega}
    \, = \, 0 \ts ,
\]
so that the new autocorrelation simply becomes
\[
    \gamma^{}_{\phi} =
    \frac{\lvert \phi_{+} + \phi_{-}\rvert^{2}}{4}\,
     \delta^{}_{\ZZ^{2}} +
     \frac{\lvert \phi_{+} - \phi_{-}\rvert^{2}}{4}\,
     \gamma \ts ,
\]
where $\gamma$ is the autocorrelation of $\omega$. Recalling
that Poisson's summation formula for lattice Dirac combs
(compare \cite{BM} and references given there) gives the
self-dual formula
\begin{equation} \label{eq:PSF}
   \widehat{\delta^{}_{\ZZ^{2}}} = \delta^{}_{\ZZ^{2}}   \ts ,
\end{equation}
one finds the new diffraction measure as
\begin{equation} \label{eq:gen-diffraction}
   \widehat{\gamma^{}_{\phi}} \, = \,
   \frac{\lvert \phi_{+} + \phi_{-}\rvert^{2}}{4}\,
     \delta^{}_{\ZZ^{2}} +
     \frac{\lvert \phi_{+} - \phi_{-}\rvert^{2}}{4}\,
     \widehat{\gamma} \ts ,
\end{equation}
which is thus a mixture of a pure point measure with whatever
$\widehat{\gamma}$ is. Note that the point measure
$\delta^{}_{\ZZ^{2}}$ is trivial (that is, carries no information
about the system) -- it only reflects the fact that the support of
$\omega^{}_{\phi}$ is the integer lattice. This corresponds to the
suspension of the dynamical system $(\XX_{\mathrm{L}},\ZZ^{2})$
into its continuous counterpart under the action of the group $\RR^{2}$.

This observation is the reason why we prefer to work with balanced
weights, and hence the motivation for our preference of the
multiplicative formulation chosen above over the usual additive one.
It is clear how to generalise this to other lattices and other types
of shifts.

\section{Diffraction of Bernoulli combs}

Here, we briefly summarise the known results on the autocorrelation
and diffraction of multi-dimensional Bernoulli systems, which includes
(fair) coin tossing as a special case.  Let $(W_{x})^{}_{x\in\ZZ^{2}}$
be a family of i.i.d.\ random variables with values in $\SSS^{1}$,
common law $\nu$ and representing random variable $W$. This defines a
Bernoulli shift on $\ZZ^{2}$, which is known to be ergodic (in fact,
mixing of all orders).

Consider now the Dirac
comb $\sum_{x\in\ZZ^{2}} W_{\nts x}\ts \delta_{x}$, which is a random
measure on $\ZZ^{2}$, or on $\RR^{2}$ with support $\ZZ^{2}$.  The
autocorrelation coefficients are given by $\eta(0)=1$ together with
$\eta (z) = \bigl| \EE_{\nu} (W) \bigr|^{2}$ (a.s.)  for all $z\ne
0$. This result can also be derived by an application of the strong
law of large numbers (SLLN), which permits a significant
generalisation to non-lattice systems; see \cite{BBM} and references
therein for more.  Also, the weaker notion of pairwise independence is
then sufficient, due to Etemadi's version \cite{Ete} of the SLLN.  A
systematic way to write the result is
\begin{equation} \label{eq:coin-auto}
   \eta (z) \, = \,  \bigl| \EE_{\nu} (W) \bigr|^{2}  +
   \bigl( \EE_{\nu} \bigl( \lvert W \rvert ^{2}\bigr) -
    \bigl| \EE_{\nu} (W) \bigr|^{2} \bigr) \delta^{}_{z,0}
   \qquad \mbox{(a.s.)}.
\end{equation}
This formulation remains valid even if we allow for more general
random variables. The following result is now straight-forward.

\begin{theorem} \label{thm:Bernoulli}
  Let $(W_{\nts x})^{}_{x\in\ZZ^{2}}$ be a family of i.i.d.\ random
  variables with values in\/ $\SSS^{1}$ and common law $\nu$. The
  associated random Dirac comb $\omega = \sum_{x\in\ZZ^{2}} W_{\nts x}\ts
  \delta_{x}$ almost surely has the autocorrelation and diffraction
  measures
\[
   \gamma \, = \, \bigl| \EE_{\nu} (W) \bigr|^{2} \, \delta^{}_{\ZZ^{2}}
       + \mathrm{cov} (W) \ts \delta^{}_{0} \quad \mbox{and} \quad
   \widehat{\gamma} \, = \, \bigl| \EE_{\nu} (W) \bigr|^{2} \,
       \delta^{}_{\ZZ^{2}}  + \mathrm{cov} (W)\ts \lambda \ts ,
\]
   where $\lambda$ is Lebesgue measure and
   $\mathrm{cov} (W) :=\, \EE_{\nu} \bigl( \lvert W \rvert ^{2}\bigr) -
   \bigl| \EE_{\nu} (W) \bigr|^{2}$ the covariance of $W$.
\end{theorem}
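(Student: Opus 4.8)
The plan is to combine the autocorrelation formula \eqref{eq:coin-auto} with the Fourier machinery already recorded in Section~3, in particular Poisson's summation formula \eqref{eq:PSF}. First I would observe that \eqref{eq:coin-auto} can be rewritten in the language of measures: summing $\eta(z)\ts\delta_z$ over $z\in\ZZ^2$ and separating the constant term $\lvert\EE_\nu(W)\rvert^2$ (which contributes to every lattice point) from the extra mass $\mathrm{cov}(W)$ sitting only at $z=0$, one gets precisely
\[
   \gamma \, = \, \bigl| \EE_{\nu} (W) \bigr|^{2} \, \delta^{}_{\ZZ^{2}}
       + \mathrm{cov} (W) \ts \delta^{}_{0} \ts .
\]
The almost-sure existence of all the $\eta(z)$ is guaranteed by the SLLN (or by Etemadi's version under pairwise independence), exactly as in the discussion preceding the theorem; since $\ZZ^2$ is countable, the exceptional null sets can be taken uniformly, so the displayed identity holds almost surely as an identity of measures in the vague topology.

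Next I would take Fourier transforms term by term. The first summand transforms by Poisson's summation formula \eqref{eq:PSF}, giving $\widehat{\delta^{}_{\ZZ^{2}}}=\delta^{}_{\ZZ^{2}}$, so its contribution to $\widehat{\gamma}$ is $\lvert\EE_\nu(W)\rvert^2\,\delta^{}_{\ZZ^{2}}$. The second summand is a point mass at the origin, whose Fourier transform is the constant function $1$, i.e.\ Lebesgue measure $\lambda$; hence its contribution is $\mathrm{cov}(W)\,\lambda$. By linearity of the Fourier transform on the (tempered, translation bounded) measures in play, this yields
\[
   \widehat{\gamma} \, = \, \bigl| \EE_{\nu} (W) \bigr|^{2} \,
       \delta^{}_{\ZZ^{2}}  + \mathrm{cov} (W)\ts \lambda \ts ,
\]
which is exactly the asserted Lebesgue decomposition: the pure point part is the trivial lattice comb $\lvert\EE_\nu(W)\rvert^2\,\delta^{}_{\ZZ^{2}}$ and the absolutely continuous part is the flat background $\mathrm{cov}(W)\,\lambda$, with no singular continuous component.

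The only genuinely delicate point is the justification of the almost-sure convergence in \eqref{eq:def-eta} and the interchange of this limit with the Fourier transform; but both are already dealt with in the excerpt (the former via the SLLN/Etemadi, the latter because $\gamma$ is a positive definite, translation bounded measure and Fourier transformation is vaguely continuous on such measures, cf.\ \cite{BF}), so here it reduces to a routine citation. I therefore do not expect a real obstacle: the theorem is essentially a repackaging of \eqref{eq:coin-auto} plus \eqref{eq:PSF}, and the main thing to be careful about is bookkeeping the constant $\lvert\EE_\nu(W)\rvert^2$ correctly when passing between the coefficient form $\eta(z)$ and the measure form $\gamma$.
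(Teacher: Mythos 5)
Your proposal is correct and follows essentially the same route as the paper: assemble $\gamma$ from the almost surely existing coefficients in \eqref{eq:coin-auto}, then Fourier transform term by term using \eqref{eq:PSF} and $\widehat{\delta^{}_{0}}=\lambda$. The only superfluous point is the worry about interchanging the limit with the Fourier transform --- no such interchange is needed, since $\gamma$ is first identified almost surely as an explicit positive definite measure and only then transformed.
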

\begin{proof}
  The structure of the (countably many) autocorrelation coefficients
  explained above gives the almost sure form of $\gamma$ by a simple
  calculation. Its Fourier transform exists, and has the claimed form
  due to $\widehat{\delta^{}_{0}} =\lambda$ and the Poisson summation
  formula for lattice Dirac combs \eqref{eq:PSF} mentioned above.
\end{proof}

Theorem~\ref{thm:Bernoulli} has an interesting special case that
includes the (fair) coin tossing sequence and makes use of the
fact that $\EE_{\nu} \bigl( \lvert W \rvert^{2}\bigr) = 1$ for
random variables with values in $\SSS^{1}$.
\begin{coro}
  Consider the situation of Theorem~$\ref{thm:Bernoulli}$ under the
  additional assumption that\/ $\EE_{\nu} (W) = 0$. Then, the
  autocorrelation and diffraction measures simplify to
\[
   \gamma \, = \, \delta^{}_{0} \quad \mbox{and} \quad
   \widehat{\gamma} \, = \, \lambda \ts ,
\]
  again almost surely as before.  \qed
\end{coro}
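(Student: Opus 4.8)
The plan is to specialise the two formulas established in Theorem~\ref{thm:Bernoulli} to the case $\EE_{\nu}(W)=0$; no new machinery is needed. First I would observe that the hypothesis makes the scalar $\lvert \EE_{\nu}(W)\rvert^{2}$ vanish, so the pure point contribution $\lvert \EE_{\nu}(W)\rvert^{2}\,\delta^{}_{\ZZ^{2}}$ drops out of both $\gamma$ and $\widehat{\gamma}$. Next, since $W$ takes values in $\SSS^{1}$, one has $\lvert W\rvert = 1$ almost surely, hence $\EE_{\nu}\bigl(\lvert W\rvert^{2}\bigr)=1$, and therefore
\[
   \mathrm{cov}(W) \, = \, \EE_{\nu}\bigl(\lvert W\rvert^{2}\bigr)
      - \lvert \EE_{\nu}(W)\rvert^{2} \, = \, 1 - 0 \, = \, 1 \ts .
\]
Substituting these two facts into the expressions from Theorem~\ref{thm:Bernoulli} gives $\gamma = \delta^{}_{0}$ and, using $\widehat{\delta^{}_{0}}=\lambda$ with the Fourier normalisation fixed earlier, $\widehat{\gamma}=\lambda$. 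The almost-sure qualifier is inherited verbatim from Theorem~\ref{thm:Bernoulli}, so nothing further is required on the probabilistic side.

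Since the whole argument is essentially a one-line substitution, there is no genuine obstacle. The only point deserving a word of care is the normalisation convention under which $\widehat{\delta^{}_{0}}=\lambda$ (equivalently, under which Poisson's summation formula takes the self-dual form~\eqref{eq:PSF}); this, however, was already invoked in the proof of Theorem~\ref{thm:Bernoulli} and needs no additional input here.
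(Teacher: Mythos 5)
Your proposal is correct and matches the paper's (implicit) argument exactly: the corollary is stated with a \qed because it follows by substituting $\lvert \EE_{\nu}(W)\rvert^{2}=0$ and $\EE_{\nu}\bigl(\lvert W\rvert^{2}\bigr)=1$ (the latter being the fact the paper highlights just before the statement) into Theorem~\ref{thm:Bernoulli}. Nothing further is needed.
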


The underlying two-dimensional Bernoulli shift has positive entropy,
for instance $\log(2)$ when $W$ takes values $\pm 1$ with equal
probability. It has long been known, due to work by Rudin and Shapiro
\cite{R, S}, that one can also construct a deterministic sequence
(with zero entropy) with the same autocorrelation and diffraction
\cite{HB}. Though the orignal construction was for $\ZZ$, it has an
immediate generalisation to $\ZZ^{2}$ (and to $\ZZ^{d}$ as well), see
\cite{HB} for details.  Moreover, one can modify the system (by
`Bernoullisation' \cite{BG}) to construct an isospectral transition
from the deterministic case to the Bernoulli system with continuously
varying entropy \cite{BG}.  This does not yet explore the full
scenario of $\ZZ^{2}$ shifts though. In particular, we now have the
possibility of genuinely two-dimensional systems (by which we mean to
exclude simple Cartesian products) with entropy of rank $1$.

\section{Autocorrelation and diffraction of Ledrappier's shift}

Consider the measure dynamical system $(\XX^{}_{\mathrm{L}}, \ZZ^{2},
\mu^{}_{\mathrm{L}})$ as introduced in Section~\ref{sec:Ledrappier}.
To any $w\in\XX^{}_{\mathrm{L}}$, we attach the Dirac comb as defined in
Eq.~\eqref{eq:def-comb} and consider its autocorrelation.
\begin{lemma} \label{lem:L-coeff}
   Let $w\in \XX_{\mathrm{L}}$ and $\omega$ as in \eqref{eq:def-comb}.
   The corresponding autocorrelation coefficients satisfy
   $\eta^{}_{\ts\mathrm{L}} (0) = 1$ and, $\mu^{}_{\mathrm{L}}$-almost
   surely, $\eta^{}_{\ts\mathrm{L}} (z) = 0$ for all\/ $0\ne z\in\ZZ^{2}$.
\end{lemma}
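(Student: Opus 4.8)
The plan is to compute the autocorrelation coefficients $\eta^{}_{\mathrm L}(z)$ via the ergodic identity \eqref{eq:eta-ergodic}, which applies since $\mu^{}_{\mathrm L}$ is ergodic for the $\ZZ^2$-action (in fact mixing), so that $\mu^{}_{\mathrm L}$-almost surely $\eta^{}_{\mathrm L}(z) = \mu^{}_{\mathrm L}(h_z) = \int_{\XX_{\mathrm L}} \overline{w^{}_0}\, w^{}_z \dd\mu^{}_{\mathrm L}(w)$. Since $w^{}_x\in\{\pm1\}$ everywhere, we have $h_0(w) = \lvert w^{}_0\rvert^2 = 1$, giving $\eta^{}_{\mathrm L}(0) = 1$ immediately (and here no ``almost surely'' is needed). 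For $z\ne 0$, the task reduces to showing that the expectation $\mu^{}_{\mathrm L}\{w^{}_0 = w^{}_z\} = \mu^{}_{\mathrm L}\{w^{}_0 = -w^{}_z\}$, i.e.\ that the product $w^{}_0 w^{}_z$ takes the values $+1$ and $-1$ with equal probability $\tfrac12$ under the Haar measure, so that $\int \overline{w^{}_0}\, w^{}_z \dd\mu^{}_{\mathrm L} = \tfrac12 - \tfrac12 = 0$.

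The key step is therefore the two-point statement, which is precisely \eqref{eq:L-twopoint}: for any fixed $x\ne 0$ and any $\varepsilon,\varepsilon'\in\{\pm1\}$ one has $\mu^{}_{\mathrm L}\{w\mid w^{}_0 = \varepsilon,\ w^{}_x = \varepsilon'\} = \tfrac14$. I would invoke this directly, since it is established earlier in the excerpt by a group-translation argument: $\XX_{\mathrm L}$ is a closed subgroup of $\{\pm1\}^{\ZZ^2}$, it contains elements realising each of the four value-pairs at $0$ and $x$, and multiplication by such elements (being Haar-measure preserving) permutes the four cylinder sets transitively, forcing them to have equal measure $\tfrac14$. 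Summing over the two pairs with $\varepsilon\varepsilon' = 1$ and the two with $\varepsilon\varepsilon' = -1$ gives $\mu^{}_{\mathrm L}\{w^{}_0 w^{}_x = 1\} = \mu^{}_{\mathrm L}\{w^{}_0 w^{}_x = -1\} = \tfrac12$, and hence $\eta^{}_{\mathrm L}(z) = \mu^{}_{\mathrm L}(h_z) = 0$ for every $z\ne 0$.

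The only subtlety — and the one point I would be careful about — is the order of quantifiers in the ``almost surely'': Birkhoff's theorem gives, for each fixed $z$, a full-measure set on which the orbit average $\eta^{}_{\mathrm L}(z)$ exists and equals $\mu^{}_{\mathrm L}(h_z)$, but the exceptional null set may depend on $z$. Since $\ZZ^2$ is countable, one takes the countable intersection over all $z\in\ZZ^2$ of these full-measure sets; on the resulting full-measure set, \emph{all} coefficients $\eta^{}_{\mathrm L}(z)$ exist simultaneously and have the stated values, which (by the equivalence noted after \eqref{eq:def-eta}) also secures the almost sure existence of the autocorrelation $\gamma$ itself. I do not anticipate any genuine obstacle here; the content is really just the combination of ergodicity, the elementary observation $\lvert w^{}_0\rvert = 1$, and the already-proven equidistribution \eqref{eq:L-twopoint} of two-point patterns under the Haar measure on $\XX_{\mathrm L}$.
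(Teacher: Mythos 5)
Your proof is correct and follows essentially the same route as the paper's: both reduce $\eta^{}_{\mathrm L}(z)$ to $\mu^{}_{\mathrm L}(h_z)$ via Birkhoff's theorem, evaluate this using the equal measures $\tfrac14$ of the two-point cylinder sets from \eqref{eq:L-twopoint}, and handle the almost-sure quantifier by intersecting over the countably many $z\in\ZZ^{2}$. The only (immaterial) difference is that the paper writes the cylinder condition at $w^{}_{-z}$ rather than $w^{}_{z}$, which changes nothing since \eqref{eq:L-twopoint} holds for every nonzero lattice point.
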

\begin{proof}
  The identity $\eta^{}_{\ts\mathrm{L}} (0) = 1$ clearly holds for all
  $w\in \XX_{\mathrm{L}}$. Let now $z\in\ZZ^{2}\setminus \{0\}$ be
  fixed. Ledrappier's shift is ergodic (indeed, is mixing; see
  \cite{L} or \cite{Sch1} for details), so that we can employ
  Birkhoff's theorem as outlined above. This gives,
  $\mu^{}_{\mathrm{L}}$-almost surely,
\[
   \eta^{}_{\ts\mathrm{L}} (z) \, = \, \mu^{}_{\mathrm{L}} (h_z) \, =
   \!\! \sum_{\varepsilon, \varepsilon' \in \{\pm 1\}}\!\!
   \varepsilon \ts \varepsilon{\ts}' \! \cdot
   \mu^{}_{\mathrm{L}} \bigl\{ w\in\XX_{\mathrm{L}} \mid
   w^{}_{0} = \varepsilon \ts , \, w^{}_{-z} = \varepsilon' \ts
   \bigr\} \, = \, 0 \ts ,
\]
because all cylinder sets under the sum have equal measure $1/4$ by
Eq.~\eqref{eq:L-twopoint}. Since there are countably many such
coefficients, the claim follows.
\end{proof}

A simple calculation now gives the following result.
\begin{theorem} \label{thm:L-diffract}
  Let $w$ be any element of the Ledrappier shift, in its
  multiplicative formulation used above, and let $\omega$ be the
  attached Dirac comb of \eqref{eq:def-comb}. Then,
  $\mu^{}_{\mathrm{L}}\nts $-almost surely,
\[
      \gamma \, = \, \delta^{}_{0}   \quad \mbox{and} \quad
      \widehat{\gamma} \, = \, \lambda
\]
   are the corresponding autocorrelation and diffraction measures.
   \qed
\end{theorem}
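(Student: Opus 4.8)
The plan is to combine Lemma~\ref{lem:L-coeff} with the same
Fourier-analytic mechanism already used in the proof of
Theorem~\ref{thm:Bernoulli}. First I would invoke
Lemma~\ref{lem:L-coeff}, which tells us that, $\mu^{}_{\mathrm{L}}$-almost
surely, the autocorrelation coefficients are
$\eta^{}_{\ts\mathrm{L}}(0)=1$ and $\eta^{}_{\ts\mathrm{L}}(z)=0$ for all
$0\ne z\in\ZZ^{2}$. Substituting this into the general form
\eqref{eq:auto-form} of the autocorrelation of a lattice Dirac comb
immediately collapses the sum to the single term
$\gamma = \sum_{z\in\ZZ^{2}} \eta^{}_{\ts\mathrm{L}}(z)\,\delta_{z}
       = \delta^{}_{0}$,
which establishes the first assertion. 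Since $\delta^{}_{0}$ is a
finite, positive definite measure, its Fourier transform exists and is
Lebesgue measure, $\widehat{\delta^{}_{0}} = \lambda$; this gives
$\widehat{\gamma} = \lambda$ and completes the argument.

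A few points should be checked along the way, though none of them is
substantial. One must confirm that the almost sure existence of all the
countably many coefficients $\eta^{}_{\ts\mathrm{L}}(z)$ is enough to
guarantee the almost sure existence of the autocorrelation $\gamma$
itself as a vague limit; this is exactly the lattice equivalence between
\eqref{eq:def-auto} and \eqref{eq:def-eta} recorded in the discussion
following \eqref{eq:def-eta}, so it may simply be cited. One should also
note that the relevant $\mu^{}_{\mathrm{L}}$-null set is a single fixed
null set (the countable union of the null sets from
Lemma~\ref{lem:L-coeff}, one for each $z$), so that the conclusion holds
for $\mu^{}_{\mathrm{L}}$-almost every $w$ simultaneously for all $z$.
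Finally, positive definiteness of $\gamma$ (needed for Fourier
transformability) is automatic here since $\gamma = \delta^{}_{0}$, but
it also follows from the general remark that $\eta$ is positive definite
as a function on $\ZZ^{2}$.

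There is essentially no main obstacle: the genuine content has already
been extracted in Lemma~\ref{lem:L-coeff}, whose proof in turn rests on
the two-point measure identity \eqref{eq:L-twopoint} coming from the
group structure of $\XX_{\mathrm{L}}$. The present theorem is then a
purely formal consequence, and the proof is the one-line calculation
indicated above; this is why the statement is marked with \qed rather
than accompanied by a separate proof. The only conceptual remark worth
making is the one already emphasised in the text: this autocorrelation,
and hence this diffraction measure, is identical to that of the
two-dimensional fair-coin Bernoulli comb from the Corollary to
Theorem~\ref{thm:Bernoulli}, despite the Ledrappier system having
entropy rank one rather than full rank --- so the theorem is the first
concrete instance of the homometry phenomenon announced in the
introduction.
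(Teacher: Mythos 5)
Your proposal is correct and matches the paper's own (implicit) argument exactly: the paper prefaces the theorem with ``A simple calculation now gives the following result'' and closes the statement with \qed, the calculation being precisely your substitution of Lemma~\ref{lem:L-coeff} into \eqref{eq:auto-form} followed by $\widehat{\delta^{}_{0}}=\lambda$. Your added remarks on the single countable null set and the lattice equivalence of \eqref{eq:def-auto} and \eqref{eq:def-eta} are the right bookkeeping points and are consistent with the discussion in the text.
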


This means that the Bernoulli shift and the Ledrappier shift are
homometric, despite the fact that the latter has zero two-dimensional
entropy with complete correlations between coordinates $0$,~$2^n
\eone$ and~$2^n \etwo$ for any~$n\ge 0$ as explained earlier. The
Bernoulli shift has vanishing $3$-point correlation here, and thus
differs.

As mentioned above, there are \emph{many} other invariant
measures on~$\XX_{\mathrm{L}}$ by~\cite{E}, including ones with
any given entropy for $\alpha^{\eone}$ (it is not entirely
clear what further entropy or correlation conditions would
constrain the possible invariant measures to be Haar measure;
Schmidt~\cite{Sch2} has partial results relating the absence of
higher-order correlations to measure rigidity for systems
like~$\XX_{\mathrm{L}}$).

Also, $\mu^{}_{\mathrm{L}}$ can be viewed as an ergodic measure on
the full shift that is concentrated on $\XX_{\mathrm{L}}$. As such, it is
mutually singular with the Bernoulli (or i.i.d.) measure.

\begin{remark}[Extension of Theorem~\ref{thm:L-diffract}]
  A similar argument applies to a whole class of systems which may be
  constructed as follows. Take any prime ideal~$P\subset R_d=\ZZ
  [x_1^{\pm1},\dots,x_d^{\pm1}]$ with the following properties:
\begin{itemize}
\item $P$ contains a prime~$p$ (this makes the system a closed
  subshift of finite type inside the full~$d$-dimensional $p$-shift,
  and in particular disconnected);
\item $P\cap\{x_1^{a_1}\cdots
    x_d^{a_d}-1\mid(a_1,\dots,a_d)\in \ZZ^{d}\}=\{0\}$, so
    the corresponding system is mixing;
\item the Krull dimension\footnote{This is calculated as $(d+1)$ minus
    the number of independent generators of~$P$. If the Krull
    dimension is~$d$, the system is the full~$d$-dimensional i.i.d.\
    shift on~$p$ symbols, since $P=pR_d$ is then the only possibility;
    as~$k$ decreases, the entropy rank gets smaller until one gets
    down to examples like Ledrappier's shift with~$k=1$. There, each
    single element is measurably isomorphic to a finite-entropy
    i.i.d.\ process.}  of~$R_d/P$ is~$k$ with~$1\le k\le d$.
\end{itemize}
Then, a similar conclusion holds for generic points in the
corresponding~$\ZZ^d$-action. Later, we will also see an extension to
connected examples (with continuous local degree of freedom).
\end{remark}

\section{Correlation functions}

So far, we have concentrated on the autocorrelation measure,
because its knowledge immediately gives the diffraction measure
via the Fourier transform. The coefficients $\eta(m)$ can be seen
as volume averaged two-point correlations, and it is natural
to also consider other correlation functions. To this end, we
define the $n$-point correlation
\begin{equation} \label{eq:def-corr}
   \big\langle z^{}_{1}, \ldots , z^{}_{n} \big\rangle (w)
   \, := \lim_{N\to\infty} \frac{1}
         {\lvert \ZZ^{2} \cap C_{N} \rvert}
   \sum_{x\in \ZZ^2\cap C_{N}}w^{}_{x+z_{1}}\!  \cdot \ldots
   \cdot w^{}_{x+z_{n}}
\end{equation}
for any fixed $w\in \XX$ and $n\in\NN$, where all $z_i \in \ZZ^{2}$.
We only consider situations where the limit exists, which is
(a.s.) the case when $\XX$ is an ergodic subshift. Then, the
ergodic theorem implies that
\begin{equation} \label{eq:corr-mean}
   \big\langle z^{}_{1}, \ldots , z^{}_{n} \big\rangle (w) \, = \,
   \int_{\XX}  v^{}_{z_{1}}\! \cdot \ldots \cdot
    v^{}_{z_{n}} \dd \mu (v)
\end{equation}
holds for almost every $w$, which also explains why we
simply write $\langle z^{}_{1}, \ldots , z^{}_{n} \rangle$
from now on. It is clear that the $n$-point correlations
are translation invariant in the sense that
\[
    \langle z^{}_{1}, \ldots , z^{}_{n} \rangle  \, = \,
    \langle x+z^{}_{1}, \ldots , x+z^{}_{n} \rangle
\]
holds for all~$x\in\ZZ^{2}$.

\begin{remark}
In this new notation, we find $\eta(z) = \langle 0, z \rangle$ for
$z\in\ZZ^{2}$, both for the Bernoulli (B) and the Ledrappier (L)
shift, because all weights are real. On the other hand, we find
$\langle z^{}_{1},z^{}_{2} \rangle^{}_{\mathrm{B}} = 0$ for any
$z^{}_{1} \ne z^{}_{2}$, hence also $\langle 0, z^{}_{1}, z^{}_{2}
\rangle^{}_{\mathrm{B}} =0$, while we earlier observed that $\langle
0, 2^n \eone, 2^n \etwo \rangle^{}_{\mathrm{L}} = 1$ for all $n\ge 0$,
which expresses the difference between the two shifts on the level of
three-point correlations.
\end{remark}

Since we also want to consider $\SSS^{1}$ as the local degree of
freedom, we need complex correlation functions as well. Here, we write
$z^{*}_{i}$ on the lefthand side of \eqref{eq:def-corr} to indicate
the factor $\overline{w^{}_{x+z_{i}}}$ on the right. The almost sure
validity of \eqref{eq:corr-mean} extends to this more general setting
in an obvious way.
Now, the relation between autocorrelation coefficients and $2$-point
correlations is given by
\[
    \eta(z) \, = \, \langle 0^{*}, z \rangle
    \, = \, \langle 0, - z ^{*} \rangle \ts ,
\]
where we have used the translation invariance. The
following result is immediate.

\begin{lemma} \label{lem:zero-corr}
  Let $(W_{\nts x})_{x\in\ZZ^{2}}$ be an i.i.d.\ family of\/ random
  variables, repesented by\/ $V$ with values in\/ $\SSS^{1}$ and law
  $\,\nu$.  Assume that\/ $V$ has zero mean, $\EE_{\nu} (V) =0$.  When
  $z^{}_{1}, \ldots , z^{}_{n} \in \ZZ^{2}$ are distinct, one almost
  surely has
\[
    \langle z^{}_{1}, \ldots , z^{}_{n} \rangle \, = \, 0\ts .
\]
   This relation also holds for the complex correlations, as
   long as the $z_{i}$ are distinct.
\end{lemma}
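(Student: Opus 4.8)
The plan is to reduce the statement to a direct computation of the expectation in \eqref{eq:corr-mean} using independence. First I would note that, by the ergodic theorem as recorded in \eqref{eq:corr-mean} (together with its obvious extension to the complex case), the correlation $\langle z^{}_{1}, \ldots , z^{}_{n} \rangle$ equals the expectation $\EE_{\nu}\bigl( W^{}_{z_{1}} \cdots W^{}_{z_{n}} \bigr)$ for almost every realisation, where in the complex case a starred index contributes the complex conjugate $\overline{W^{}_{z_{i}}}$. Since the $z_{i}$ are distinct, the random variables $W^{}_{z_{1}}, \ldots , W^{}_{z_{n}}$ are independent (this is where the i.i.d.\ hypothesis enters, and where Etemadi's pairwise version would suffice only if no index is repeated, which is exactly the standing assumption).

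The main step is then to factor the expectation. By independence,
\[
    \EE_{\nu}\bigl( W^{}_{z_{1}} \cdots W^{}_{z_{n}} \bigr)
    \, = \, \prod_{i=1}^{n} \EE_{\nu}\bigl( W^{}_{z_{i}} \bigr)
    \, = \, \bigl( \EE_{\nu}(V) \bigr)^{n} \, = \, 0 \ts ,
\]
using $\EE_{\nu}(V)=0$ for each factor. In the complex case, a starred index replaces $\EE_{\nu}(V)$ by $\overline{\EE_{\nu}(V)}$, which is still $0$; so the product vanishes as soon as at least one factor is present, i.e.\ for every $n\ge 1$ and every choice of which indices are starred. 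Hence $\langle z^{}_{1}, \ldots , z^{}_{n} \rangle = 0$ almost surely for any fixed tuple of distinct points, and since there are only countably many such tuples in $\ZZ^{2}$, the exceptional null sets can be taken in a single union, giving the almost sure statement uniformly in the tuple.

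I do not expect a serious obstacle here; the only point requiring a word of care is the interchange of the almost-sure set over the (countably many) tuples, and the observation that distinctness of the $z_{i}$ is precisely what guarantees the factorisation — if two indices coincided one would instead pick up a factor $\EE_{\nu}(\lvert V\rvert^{2})=1$ from the conjugate pair (or $\EE_{\nu}(V^{2})$ in the non-conjugate case), which need not vanish. I would therefore state the proof in three short lines: invoke \eqref{eq:corr-mean}, factor by independence, and conclude, adding the remark that the complex case is identical since conjugation fixes $0$.

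\begin{proof}
By the ergodic theorem in the form \eqref{eq:corr-mean} (and its evident extension to the complex case, in which a starred index $z^{*}_{i}$ contributes a factor $\overline{W^{}_{z_{i}}}$), we have, almost surely,
\[
   \langle z^{}_{1}, \ldots , z^{}_{n} \rangle \, = \,
   \EE_{\nu}\bigl( W^{}_{z_{1}} \cdots W^{}_{z_{n}} \bigr) \ts ,
\]
with the understanding that conjugates are inserted at the starred positions. Since the points $z^{}_{1}, \ldots , z^{}_{n}$ are distinct, the random variables $W^{}_{z_{1}}, \ldots , W^{}_{z_{n}}$ are independent, so the expectation factorises into a product of terms, each equal to either $\EE_{\nu}(V) = 0$ or $\overline{\EE_{\nu}(V)} = 0$. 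Hence the product vanishes for every $n\ge 1$, which proves the claim for a fixed tuple. As $\ZZ^{2}$ is countable, there are only countably many such tuples of distinct points, so the union of the corresponding null sets is still null, and the stated identity holds almost surely, simultaneously for all admissible tuples.
\end{proof}
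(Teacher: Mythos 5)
Your proof is correct and takes essentially the same route as the paper's: invoke the ergodic theorem to identify the correlation with an expectation, then factorise by independence of the $W_{z_{i}}$ at distinct sites, so that each factor equals $\EE_{\nu}(V)=0$ or its conjugate. The only caveat is your parenthetical suggestion that pairwise independence (\`a la Etemadi) would suffice here --- it would not, since factorising an $n$-fold product for $n\ge 3$ requires full independence --- but as you actually use the i.i.d.\ hypothesis, this aside does not affect the argument.
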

\begin{proof}
The corresponding Bernoulli system is mixing, hence ergodic.
Eq.~\eqref{eq:corr-mean} results in a factorisation of the
righthand side. One obtains $n$ factors,
each of which (due to the i.i.d.\ nature of
the $W_{\! z_{i}}$) is an integral of the form
\[
       \int_{\SSS^{1}} v \dd \nu (v)
       = \EE_{\nu} (V) = 0 \ts ,
\]
so that the entire correlation function also vanishes.

In the case of complex correlations, as long as the $z_{i}$
are distinct, each factor is either of this form or its
complex conjugate, with the same conclusion.
\end{proof}

It is clear how one can generalise this result to also include powers,
where one then needs further assumptions on higher moments of the law
$\nu$.  Let us state one version that will become relevant below. To
do so, we need a slightly more general notation for the mixture of
moments and correlations. We write $\langle \ldots , (z_i, m_i) ,
\ldots \rangle$ for the correlation function where we take the $i$th
factor as $W_{x+z_i}^{m_i}$ in the orbit average, with $m_i \in \ZZ$
arbitrary.  Since we work on $\SSS^{1}$, this includes complex
conjugation via $m_i = -1$.

\begin{prop}  \label{prop:corr-bern}
   Consider the full shift on $\ts \XX=(\SSS^{1})^{\ZZ^{2}}\!$, with the
   invariant measure $\mu$ that is constructed from the uniform
   distribution on\/ $\SSS^{1}$ via the Bernoulli $($or i.i.d.$)$
   process. Then, the generalised correlations $\big\langle (z^{}_1,
   m^{}_1), \ldots , (z^{}_{n}, m^{}_{n}) \big\rangle$ with $n\in\NN$
   and distinct $z_i\in\ZZ^2$ almost surely exist. In particular, they
   all vanish, unless $m^{}_{1} = \ldots =m^{}_{n} =0 $, when they are
   $1$.
\end{prop}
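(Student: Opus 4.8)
The plan is to reduce the statement to an application of the multidimensional ergodic theorem combined with the independence structure of the i.i.d.\ process, exactly as in the proof of Lemma~\ref{lem:zero-corr}, but keeping track of the exponents $m_i$. First I would note that the uniform distribution on $\SSS^1$ has all moments finite (indeed $\lvert W \rvert = 1$), so the generalised correlation $\big\langle (z^{}_1, m^{}_1), \ldots , (z^{}_{n}, m^{}_{n}) \big\rangle (w)$ is an orbit average of the bounded continuous function $w \mapsto w_{z_1}^{m_1} \cdots w_{z_n}^{m_n}$ on $\XX=(\SSS^1)^{\ZZ^2}$. Since the Bernoulli measure $\mu$ is ergodic for the $\ZZ^2$-action and there are only countably many choices of the data $(n; z_1,\dots,z_n; m_1,\dots,m_n)$, Birkhoff's theorem (in the $\ZZ^2$-version cited above) yields the almost sure existence of all these limits simultaneously, and identifies each with the integral $\int_{\XX} v_{z_1}^{m_1} \cdots v_{z_n}^{m_n} \dd\mu(v)$.

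Next I would exploit that the $z_i$ are distinct, so the factors $W_{z_1},\dots,W_{z_n}$ are independent random variables, whence $W_{z_1}^{m_1},\dots,W_{z_n}^{m_n}$ are independent as well. The integral therefore factorises into $\prod_{i=1}^{n} \EE_{\nu}(W^{m_i})$, and the computation reduces to the single-site moments $\EE_{\nu}(W^{m}) = \frac{1}{2\pi}\int_0^{2\pi} e^{\mathrm{i} m\theta}\dd\theta$ for the uniform law $\nu$ on $\SSS^1$. This equals $0$ for every $m\in\ZZ\setminus\{0\}$ and equals $1$ for $m=0$. Hence the product vanishes as soon as at least one $m_i\ne 0$, and equals $1$ precisely when $m_1=\dots=m_n=0$, which is the claimed dichotomy.

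There is essentially no serious obstacle here; the statement is a mild generalisation of Lemma~\ref{lem:zero-corr} and the only points that need care are bookkeeping ones. The mild subtlety worth spelling out is that the almost sure null set must be chosen uniformly over all the (countably many) index configurations, which is exactly why countability of $\ZZ^2$ (and of $\NN$ and of $\ZZ^n$) is invoked; and that the case $m_1=\dots=m_n=0$ is degenerate in that the function is the constant $1$, so its orbit average is trivially $1$ regardless of the ergodic theorem. One could also remark that distinctness of the $z_i$ is essential: if two sites coincide, the corresponding exponents add, and a nonzero total exponent at a repeated site would again force the correlation to vanish, whereas exponents summing to zero at that site would contribute a factor $1$; but since the proposition only asserts the result for distinct $z_i$, this refinement is not needed.
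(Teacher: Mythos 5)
Your argument is correct and coincides with the paper's own proof: ergodicity gives the almost sure identification of the orbit average with the integral, independence of the distinct sites factorises it into single-site moments, and the uniform law on $\SSS^{1}$ makes each factor $\int_{0}^{1} e^{2\pi i m y}\dd y = \delta_{m,0}$. The extra bookkeeping remarks (countably many index configurations, the degenerate all-zero case) are fine but not needed beyond what the paper states.
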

\begin{proof}
The Bernoulli measure $\mu$ is ergodic, so that the almost sure
existence claim is clear. By the ergodic theorem together with the
i.i.d.\ property, one finds
\[
     \big\langle (z^{}_1, m^{}_1), \ldots ,
            (z^{}_{n}, m^{}_{n}) \big\rangle
     = \prod_{j=1}^{n} \int_{\XX} w_{z_j}^{m_j} \dd \mu (w)
     = \prod_{j=1}^{n} \int_{0}^{1} e^{2 \pi i \ts m_j y} \dd y \ts ,
\]
where the last step is just one way to write down the uniform
distribution at (any) one site. Our claim is now obvious.
\end{proof}

\section{The $(\times 2,\times 3)$-shift}

A related example, with continuous site space $\SSS^{1}$, is the
following. Define
\[
   \XX_{\mathrm{F}}\, = \,
   \big\{w\in(\SSS^{1})^{\ZZ^2}\mid w_{z+\eone}=w_{z}^2, \,
   w_{z+\etwo}=w_{z}^3 \, \mbox{ for all }z\in\ZZ^{2} \big\}
\]
(that is, the $(\times 2,\times 3)$-shift \cite{Ru} written
multiplicatively, with ergodic invariant measure
$\mu^{}_{\mathrm{F}}$).  Then, we may consider the complex-valued
Dirac comb
\begin{equation} \label{eq:def-comb-23}
    \omega \, = \, \omega_{w}
    \, = \sum_{x \in\ZZ^2} w_{x} \ts \delta_{x} \ts .
\end{equation}
As before, using the ergodic theorem, we see that the autocorrelation
coefficients $\eta(z) = \langle 0^{*},z\rangle$ almost surely exist,
and that their calculation reduces to computing an integral over
$\XX_{\mathrm{F}}$.

\begin{theorem} \label{thm:23}
 Let $w$ be any element of the $(\times 2,\times 3)$-shift, in its
  multiplicative formulation used above, and let $\omega$ be the
  attached Dirac comb of \eqref{eq:def-comb-23}. Then,
  $\mu^{}_{\mathrm{F}}\nts $-almost surely,
\[
      \gamma \, = \, \delta^{}_{0}   \quad \mbox{and} \quad
      \widehat{\gamma} \, = \, \lambda
\]
   are the corresponding autocorrelation and diffraction measures.
\end{theorem}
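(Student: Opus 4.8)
The plan is to run exactly the route used for Theorem~\ref{thm:L-diffract}: reduce everything to the autocorrelation coefficients $\eta(z) = \langle 0^{*}, z\rangle$, show that $\eta(z) = \delta^{}_{z,0}$, and then read off $\gamma = \delta^{}_{0}$ and $\widehat{\gamma} = \lambda$ from $\widehat{\delta^{}_{0}} = \lambda$. Since $\mu^{}_{\mathrm{F}}$ is ergodic and $\ZZ^{2}$ is countable, Birkhoff's theorem gives (almost surely, simultaneously for all $z$) the existence of the $\eta(z)$, and hence of $\gamma$, together with the representation $\eta(z) = \int_{\XX_{\mathrm{F}}} \overline{w^{}_{0}}\, w^{}_{z}\dd\mu^{}_{\mathrm{F}}(w)$, as already noted just before the statement. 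The case $z=0$ is immediate from $\lvert w^{}_{0}\rvert = 1$, so the only content is the vanishing of this integral for $z\ne 0$.

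The key point I would use is that $h^{}_{z}\colon w\mapsto \overline{w^{}_{0}}\, w^{}_{z}$ is a continuous character of the compact Abelian group $\XX_{\mathrm{F}}$ (the group law being pointwise multiplication, so each coordinate evaluation $w\mapsto w^{}_{x}$ is a character). The same Haar-invariance argument as in the Ledrappier case then applies: $\mu^{}_{\mathrm{F}}(h^{}_{z}) = h^{}_{z}(v)\,\mu^{}_{\mathrm{F}}(h^{}_{z})$ for every $v\in\XX_{\mathrm{F}}$, so $\eta(z) = \mu^{}_{\mathrm{F}}(h^{}_{z})$ vanishes unless $h^{}_{z}$ is the trivial character, i.e.\ unless $w^{}_{0} = w^{}_{z}$ holds identically on $\XX_{\mathrm{F}}$.

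It then remains to check that $h^{}_{z}$ is \emph{not} trivial for any $z = (a,b)\ne 0$, and here I would exhibit explicit members of $\XX_{\mathrm{F}}$: for $t\in\RR$, the configuration $w^{}_{(m,n)} = \exp(2\pi\ts i\, 2^{m}3^{n}\ts t)$ (with $2^{m}3^{n}\in\QQ^{}_{>0}$, so well defined for all $(m,n)\in\ZZ^{2}$) satisfies $w^{}_{(m+1,n)} = (w^{}_{(m,n)})^{2}$ and $w^{}_{(m,n+1)} = (w^{}_{(m,n)})^{3}$, hence lies in $\XX_{\mathrm{F}}$. For it, $w^{}_{0}$ and $w^{}_{z}$ differ precisely when $(2^{a}3^{b}-1)\ts t\notin\ZZ$; since $2^{a}3^{b}=1$ would force $a=b=0$ by unique factorisation, we have $2^{a}3^{b}-1\ne 0$ and a suitable $t$ does the job. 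This yields $\eta(z) = 0$ for all $z\ne 0$, hence $\gamma = \delta^{}_{0}$, and \eqref{eq:PSF} (equivalently $\widehat{\delta^{}_{0}} = \lambda$) gives $\widehat{\gamma} = \lambda$.

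I do not expect a serious obstacle; the one place calling for a little care is the last step, where one must actually produce enough elements of $\XX_{\mathrm{F}}$ to see that $h^{}_{z}$ is nontrivial for \emph{every} nonzero $z$, the "line" configurations above being the natural choice and the sole arithmetic input being $2^{a}3^{b}\ne 1$ for $(a,b)\ne 0$. (Conceptually this just records that the dual group of $\XX_{\mathrm{F}}$ is $R^{}_{2}/(x^{}_{1}-2,x^{}_{2}-3)\cong\ZZ[\tfrac16]$, under which $h^{}_{z}$ corresponds to $2^{a}3^{b}-1$; this is the connected, continuous-local-degree-of-freedom analogue of the situation in the Remark following Theorem~\ref{thm:L-diffract}.)
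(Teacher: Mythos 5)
Your argument is correct and reaches the paper's conclusion by the same overall reduction (almost sure existence of all $\eta(z)$ via Birkhoff and the countability of $\ZZ^2$, then $\eta(z)=\delta_{z,0}$, then $\widehat{\delta^{}_{0}}=\lambda$ and Poisson summation), but the step that kills $\eta(z)$ for $z\ne 0$ is implemented differently. The paper uses the defining relations to rewrite $\overline{w_{x}}\,w_{x+z}$ as an integer power of a single coordinate (after restricting to $m\ge 0$ via $\eta(-z)=\overline{\eta(z)}$, and splitting into the cases $n\ge 0$ and $n<0$ by shifting to a corner), and then evaluates the resulting integral using the stated fact that $\mu^{}_{\mathrm{F}}$ marginalises to the uniform distribution on $\SSS^{1}$ at a single site; the arithmetic input is $2^{m}3^{n}-1\ne 0$, respectively $2^{m}-3^{|n|}\ne 0$. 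You instead observe that $h_{z}(w)=\overline{w_{0}}\,w_{z}$ is a continuous character of the compact Abelian group $\XX_{\mathrm{F}}$, so Haar invariance alone forces $\mu^{}_{\mathrm{F}}(h_{z})=0$ unless $h_{z}$ is trivial, and you rule out triviality for $z=(a,b)\ne 0$ with the explicit configurations $w_{(m,n)}=\exp(2\pi i\,2^{m}3^{n}t)$ together with $2^{a}3^{b}\ne 1$. Your route avoids both the case distinction and the unproved marginalisation claim (in fact it yields that claim, since the one-site projection is a continuous surjective homomorphism onto $\SSS^{1}$, as your configurations show, so it pushes Haar measure to the uniform distribution); it mirrors the Haar-invariance argument the paper uses for the Ledrappier shift, and your identification of the dual group with $\ZZ[\frac16]$ is the right conceptual summary. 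The paper's more computational route has the advantage of being exactly the template that is then generalised to arbitrary $n$-point correlations in Theorem~\ref{thm:general-23}. One point you should make explicit: you take $\mu^{}_{\mathrm{F}}$ to be the Haar measure of $\XX_{\mathrm{F}}$, whereas the statement only says `ergodic invariant measure'; some such identification is indispensable (the claim fails, for instance, for the ergodic point mass at the all-ones fixed point), and the paper's proof makes the same identification implicitly through the uniform one-site marginal, so this is a matter of stating the convention rather than a gap.
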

\begin{proof}
The almost sure existence of $\gamma$ follows from that of the
countably many coefficients $\eta(z)$ with $z\in\ZZ^2$. To show
the main claim, we need to show the almost sure validity of
$\eta(z) = \delta_{z,0}$. Since $\eta(-z) = \overline{\eta(z)}$
by the positive definiteness of $\eta$ as a function on $\ZZ^2$,
it suffices to consider $z=m\eone + n\etwo$ with $m\ge 0$ and
$n\in\ZZ$.

Observe that we have
\[
    \eta(z)\, = \lim_{N\to\infty} \frac{1}{\lvert
    \ZZ^{2} \cap C_{N} \rvert }
    \sum_{x\in \ZZ^2 \cap C_{N}}\! \overline{w_{x}} \, w_{x+z} \ts ,
\]
and consider the case with $n\ge 0$ first. The rule for the
translation action on $\XX_{\mathrm{F}}$ implies that
\[
      w_{x+z} = \bigl( w_{x} \bigr)^{2^m \ts 3^n},
\]
so that $\eta(z)$ can be viewed (as $w_{x} \in \SSS^{1}$) as an orbit
average of $(w_{x})^{2^m\ts 3^n - 1}$, with the almost sure limit
\[
   \int_{\XX_{\mathrm{F}}}\! \bigl(w^{}_{0}\bigr)^{2^m \ts 3^n - 1}
         \dd \mu^{}_{\mathrm{F}} (w) \, =
   \int_{0}^{1} \exp\bigl(2 \pi i\ts (2^m \ts 3^n -1) x\bigr) \dd x
    = \begin{cases} 1 , & \text{if $m=n=0$}, \\
                    0 , & \text{otherwise},
      \end{cases}
\]
where $m=n=0$ means $z=0$. Here, the first equality uses the fact that
the invariant measure $\mu^{}_{\mathrm{F}}$ reduces to the uniform
distribution on $\SSS^{1}$ when marginalised on all sites but one.

Similary, when $m\ge 0$ with $n<0$, we set $y=x-\lvert n\rvert \etwo$,
so that $\overline{w_{x}} = (\overline{w_{y}})^{3_{}^{| n |}}$
and $w_{x+z} = (w_{y})^{2^m}$, whence $\eta(z)$ is now an orbit
average of $(w_{y})^{2^m - 3^{\lvert n \rvert}}$, hence almost
surely given by the integral
\[
   \int_{0}^{1} \exp\bigl(2 \pi i\ts (2^m - 3^{\lvert n \rvert})
         x \bigr) \dd x
\]
which always vanishes, as $n<0$ was assumed.

Putting the arguments together indeed (almost surely) results in
$\eta(z) = \delta_{z,0}$, hence $\gamma = \delta_0$ and
$\widehat{\gamma}=\lambda$ as claimed.
\end{proof}

Quite remarkably, the similarities between the Bernoulli shift
$(\SSS^{1})^{\ZZ^{2}}$ and the $(\times 2, \times 3)$-shift
go a lot further.

\begin{theorem} \label{thm:general-23}
  Consider the $(\times 2,\times 3)$-shift in its multiplicative
  version as used above, with invariant measure
  $\mu^{}_{\mathrm{F}}$. If $z^{}_{1}, \ldots , z^{}_{n}$ are $n$
  arbitrary points of the lattice\/ $\ZZ^2$, we almost surely have
  $\langle z^{}_1, \ldots , z^{}_{n} \rangle^{}_{\mathrm{F}} = 0$.
  More generally, with $z^{}_{j} = k^{}_{j} \eone +
  \ell^{}_{j} \etwo$, one almost surely obtains
\[
   \big\langle (z^{}_1, m^{}_1), \ldots , (z^{}_{n}, m^{}_{n})
   \big\rangle^{}_{\mathrm{F}}
   = \begin{cases}
      1 ,  & \text{if $\sum_{j=1}^{n} m_{j}
                   \ts 2^{k_{j}} \ts 3^{\ell_{j}} = 0$}, \\
      0,  & \text{otherwise},
   \end{cases}
\]
  for the generalised correlation coefficients.
\end{theorem}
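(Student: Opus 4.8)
The plan is to reduce the general correlation coefficient to the single-site integral already used in the proof of Theorem~\ref{thm:23}, exploiting the fact that the defining relations of $\XX_{\mathrm{F}}$ propagate forward in a purely deterministic way. First I would record the elementary pointwise identity that, for every $w\in\XX_{\mathrm{F}}$ and all integers $k,\ell\ge 0$, one has $w^{}_{k\eone+\ell\etwo} = \bigl(w^{}_{0}\bigr)^{2^{k}3^{\ell}}$. This follows by iterating $w^{}_{z+\eone}=w_{z}^{2}$ and $w^{}_{z+\etwo}=w_{z}^{3}$, the two relations being mutually consistent since $(w_z^2)^3=(w_z^3)^2$; note that it holds for every configuration, with no reference to the measure.

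Next, writing $z^{}_{j}=k^{}_{j}\eone+\ell^{}_{j}\etwo$, I would use the translation invariance of the correlation functions (valid in the almost-sure, integrated form \eqref{eq:corr-mean}, since $\mu^{}_{\mathrm{F}}$ is shift-invariant) to replace each $z^{}_{j}$ by $z^{}_{j}+N(\eone+\etwo)$, with $N$ chosen large enough that all the resulting coordinates $k^{}_{j}+N$ and $\ell^{}_{j}+N$ are nonnegative. Applied inside the integral representation \eqref{eq:corr-mean}, the identity from the first step then collapses the integrand $\prod_{j}w_{z_j+N(\eone+\etwo)}^{m_j}$ to the single power $\bigl(w^{}_{0}\bigr)^{E}$ with the integer exponent $E=2^{N}3^{N}\sum_{j=1}^{n}m_{j}2^{k_{j}}3^{\ell_{j}}$.

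It then remains to evaluate $\int_{\XX_{\mathrm{F}}}\bigl(w^{}_{0}\bigr)^{E}\dd\mu^{}_{\mathrm{F}}(w)$. Since $\mu^{}_{\mathrm{F}}$ marginalises to the uniform distribution on $\SSS^{1}$ at any single site (as already invoked for Theorem~\ref{thm:23}), this integral equals $\int_{0}^{1}e^{2\pi i E x}\dd x$, which is $1$ if $E=0$ and $0$ otherwise; and because the prefactor $2^{N}3^{N}$ is nonzero, $E=0$ is equivalent to $\sum_{j}m_{j}2^{k_{j}}3^{\ell_{j}}=0$, giving the stated dichotomy. The almost-sure existence of all these coefficients is automatic: $\mu^{}_{\mathrm{F}}$ is ergodic, and there are only countably many choices of $n\in\NN$, of the $z^{}_{j}\in\ZZ^{2}$ and of the exponents $m^{}_{j}\in\ZZ$, so Birkhoff's theorem applies to all of them simultaneously. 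The first assertion is the special case $m^{}_{1}=\ldots=m^{}_{n}=1$: here $E=\sum_{j}2^{k_{j}+N}3^{\ell_{j}+N}$ is a sum of $n\ge 1$ positive integers, hence strictly positive, so $\langle z^{}_{1},\ldots,z^{}_{n}\rangle^{}_{\mathrm{F}}=0$ for any lattice points $z^{}_{j}$ whatsoever (distinct or not).

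I do not expect a genuine obstacle in this argument; the one point that genuinely requires care is that the relations defining $\XX_{\mathrm{F}}$ are not invertible on $\SSS^{1}$ (the map $t\mapsto t^{2}$ is two-to-one), so one cannot directly express $w^{}_{z}$ through $w^{}_{0}$ when $z$ has a negative coordinate. This is exactly why the translation of all sample points into the positive cone in the second step is essential rather than cosmetic, and it is the only place where a little thought is needed.
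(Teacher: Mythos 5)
Your argument is correct and is essentially the paper's own proof: both reduce the product $\prod_j w_{x+z_j}^{m_j}$ to a single power of the configuration value at a site lying coordinatewise below all the $z_j$ (the paper uses the lower-left corner $y=(\min_i k_i)\eone+(\min_i\ell_i)\etwo$ directly, you achieve the same by translating everything into the positive cone), and then evaluate the resulting single-site integral against the uniform marginal, with the exponent being a nonzero integer multiple of $\sum_j m_j 2^{k_j}3^{\ell_j}$ in either version. The subtlety you flag -- that the relations only propagate forward because $t\mapsto t^2$ is not invertible on $\SSS^1$ -- is exactly the reason for the paper's choice of base point, so nothing is missing.
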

\begin{proof}
To calculate the generalised correlation coefficients,
we need a variant of the trick used in Theorem~\ref{thm:23}.
Given $z^{}_{1}, ... , z^{}_{n}$ with coordinates as
stated, define $y = k \eone + \ell \etwo$ with $k=\min_{i} k_{i}$
and $\ell = \min_{i} \ell_{i}$, so that $y$ is the lower left corner
of the smallest lattice square (with edges parallel to $\eone$ and
$\etwo$) that contains all the $z_{i}$. One can now check that
\[
   \begin{split}
    \big \langle (z^{}_{1}, & m^{}_{1}),  \ldots ,
       (z^{}_{n}, m^{}_{n}) \big \rangle^{}_{\mathrm{F}} \, =
    \lim_{N\to\infty} \frac{1}{\vert \ZZ^{2} \cap C_{N}
        \rvert } \sum_{x\in \ZZ^{2} \cap C_{N}}
    \prod_{j=1}^{n} \bigl( w_{x+y} \bigr)^{m_{j} \ts 2^{k_{j} - k}
        \ts 3^{\ell_{j} - \ell}} \\ & =
    \int_{\XX^{}_{\mathrm{F}}}
       \bigl( w_{y} \bigr)^{\sum_{j=1}^{n} m_{j} \ts 2^{k_{j} - k}
        \ts 3^{\ell_{j} - \ell}} \dd \mu^{}_{\mathrm{F}} (w) \, =
    \int_{0}^{1} e^{2 \pi i \ts \sum_{j=1}^{n} m_{j}
       ( 2^{k_{j} - k} \ts 3^{\ell_{j} - \ell}) \ts t} \dd t \ts ,
   \end{split}
\]
where the second last step holds (almost surely) by the ergodic
theorem, while the last results from the appropriate marginalisation
of the measure $\mu^{}_{\mathrm{F}}$, as before.

{}For the first claim, all $m_i=1$, so that the exponent under the
integral is of the form $2\pi i \ts N t$ with $N$ a positive integer,
wherefore the integral vanishes. More generally, since
$N=\sum_{j=1}^{n} m_{j} ( 2^{k_{j} - k} \ts 3^{\ell_{j} - \ell})$ is
always an integer by construction, the integral vanishes unless
$N=0$, where it is $1$. This happens precisely when
$\sum_{j=1}^{n} m_{j}\ts 2^{k_{j}} \ts 3^{\ell_{j}} = 0$.
\end{proof}

\begin{remark}\label{remarkaboutcorrelationsintimes2times3}
In comparison, we have the somewhat surprising situation that the
full two-dimensional Bernoulli shift $(\SSS^1)^{\ZZ^{2}}$
and the $(\times 2, \times 3)$-shift are more or less indistinguishable
by correlation functions. There are differences though, such as the
relation
\[
   \big\langle \bigl((0,1),-2 \bigr), \bigl((1,1),1\bigr)
   \big\rangle_{\mathrm{F} } =1 \ts ,
\]
which rests on the identity $-2\cdot 2^{0} \ts 3^{1}+1\cdot
2^{1} \ts 3^{1}=0$, whereas the corresponding correlations for
functions determined by distinct single coordinates in the
Bernoulli shift always vanish. The condition $\sum_{j=1}^{n}
m_{j} 2^{k_j} 3^{\ell_j}=0$ appearing in
Theorem~\ref{thm:general-23} corresponds to a correlation
between the characters (that is, functions) defined by the
pairs $(z_{i} , m_{i})$,  where $(z,m)$ defines the character
that takes the value~$\exp({2\pi i m w_{z}})$ on the
point~$w=(w_z)\in  \XX_{\mathrm{F}}$. The deep $S$-unit theorem
of Schlickewei~\cite{Schli} shows that this condition will only
be achieved in an essential way (that is, without a shorter
vanishing sum) finitely often, and this Diophantine result is
equivalent to the system being mixing of all orders
by~\cite{SW}.
\end{remark}

Clearly, the $(\times 2, \times 3)$-shift is a nullset for the
Bernoulli measure, but the support of the invariant measure is not
something that is usually known at the beginning of the inference
problem, so that this result is further `bad news' for the inverse
problem of structure determination.

\begin{remark}
The way in which correlations between characters (or between
trigonometric polynomials) on~$\XX_{\mathrm F}$ vanish may be
viewed as a form of multiple~$m$-dependence in the sense
of~\cite{AGKV}. For a given collection of trigonometric
polynomials, the determination of a minimal value of~$m$ is a
rather subtle Diophantine problem, but as indicated in
Remark~\ref{remarkaboutcorrelationsintimes2times3}, such an~$m$
will always exist.
\end{remark}

\begin{remark}
In contrast to the Ledrappier shift from Section~\ref{sec:Ledrappier}, the
$(\times 2,\times 3)$-system has few invariant measures in
the following sense. If $m$ is an ergodic invariant probability
measure that gives some $z\in\mathbb Z^2$ positive
entropy, then $m$ must be the Haar measure by a result
of Rudolph~\cite{Ru}.

The two systems~$\XX_{\mathrm{L}}$ and~$\XX_{\mathrm{F}}$ have
interesting overall properties as follows; in particular, in terms of
having invariant measures, $\XX_{\mathrm{L}}$ is closer to i.i.d.\
than~$\XX_{\mathrm{F}}$, while $\XX_F$ is closer in terms of mixing.
Some of the properties are summarised in Table~\ref{tab-1}.
\end{remark}

\begin{table}
\begin{tabular}{|c|c|c|c|c|}
\hline & Rudin-Shapiro
&Ledrappier&$(\times2,\times3)$&2-dim.\ i.i.d.\\
\hline
$\vphantom{\sum^A}$$\mathbb Z^2$ entropy&
$0$ & $0$&$0$&$>0$\\
\hline
entropy rank&
$0$ & $1$&$1$&$2$\\
\hline
$\vphantom{\sum^A}$
mixing of all orders?&
No & No & Yes &Yes\\
\hline
$\vphantom{\sum^A}$
many inv. measures?&
No & Yes &No &Yes\\
\hline
pure Lebesgue diffr.?&
Yes & Yes & Yes & Yes \\
\hline
\end{tabular}

\bigskip

\caption{Basic information and comments on some of the systems discussed.}
\label{tab-1}
\end{table}

\section{Further remarks and outlook}

Above, we have concentrated on systems that are supported on
$\ZZ^{2}$.  When one lifts this restriction, one naturally enters the
realm of point process theory. An interesting example there is
provided by the classic Poisson process and its many
siblings. Starting from a homogeneous and stationary Poisson process
of (point) density $1$, and turning it into a marked point process by
adding weights $\pm 1$ with equal probability to the points of any
given realisation, produces a randomly weighted point set with (almost
sure) autocorrelation $\delta^{}_{0}$ and diffraction $\lambda$; see
\cite{BBM} for a proof and further details and examples.

It is well-known that lattice dynamical systems of algebraic
origin are spectrally indistinguishable (at least
once they are mixing) in the sense that they all have countable
Lebesgue dynamical spectrum. Above, we saw that they also have
an absolutely continuous diffraction spectrum. While the connection
between dynamical and diffraction spectra is well understood for
the pure point case (see \cite{BL} and references therein), this is
not so in general (as follows from \cite{ME} by way of counterexample).
However, the existing
body of examples seems to indicate that for systems with pure
Lebesgue spectrum, perhaps under some mild additional condition,
there is again a closer connection between these two types of
spectra, and it would be nice to understand this better.

\smallskip
\section*{Acknowledgements}
It is a pleasure to thank Aernout van Enter, Robert V.\ Moody
and Anthony Quas for helpful discussions, and the reviewers for
several useful comments. This work was supported by the German
Research Council (DFG), within the CRC 701.

\smallskip


\begin{thebibliography}{99}
\small

\bibitem{AGKV}
J.~Aaronson, D.~Gilat, M.~Keane and V. de Valk,
An algebraic construction of a class of one-dependent processes,
\textit{Ann.\ Prob.} \textbf{17} (1989) 128--143.

\bibitem{BBM}
M.~Baake, M.~Birkner and R.V.~Mooody,
Diffraction of stochastic point sets:\ Explicitly computable examples,
\textit{Commun.\ Math.\ Phys.} \textbf{293} (2009) 611--660;
 \texttt{arXiv:0803.1266}.

\bibitem{BG-TM}
M.~Baake and U.~Grimm,
The singular continuous diffraction measure of the Thue-Morse chain,
\textit{J.\ Phys.\ A:\ Math.\ Theor.} \textbf{41} (2008) 422001;
\texttt{arXiv:0809/0580}.

\bibitem{BG}
M.~Baake and U.~Grimm,
Kinematic diffraction is insufficient to distinguish order from disorder,
\textit{Phys.\ Rev. B} \textbf{79} (2009) 020203(R) and
\textbf{80} (2009) 029903(E);
\texttt{arXiv:0810.5750}.

\bibitem{BL}
M.~Baake and D.~Lenz,
Dynamical systems on translation bounded measures:\
Pure point dynamical and diffraction spectra,
\textit{Ergodic Th.\ \& Dynam.\ Syst.} \textbf{24} (2004)
1867--1893; math.DS/0302231.

\bibitem{BLM}
M.~Baake, D.~Lenz and R.V.~Moody,
Characterization of model sets by dynamical systems,
\textit{Ergodic Th.\ \& Dynam.\ Syst.} \textbf{27}
(2007) 341--382; \texttt{arXiv:math/0511648}.

\bibitem{BM}
M.~Baake and R.V.~Moody,
Weighted Dirac combs with pure point diffraction,
\textit{J.\ reine angew.\ Math.\ (Crelle)} \textbf{573}
(2004) 61--94; \texttt{arXiv:math.MG/0203030}.

\bibitem{BF}
C.~Berg and G.~Forst,
\textit{Potential Theory on Locally Compact Abelian Groups},
Springer, Berlin (1975).


\bibitem{Cowley}
J.M.~Cowley,
\textit{Diffraction Physics},
3rd ed., North-Holland, Amsterdam (1995).

\bibitem{E}
M.~Einsiedler,
Invariant subsets and invariant measures for irreducible
actions on zero-dimensional groups,
\textit{Bull.\ London Math.\ Soc.} \textbf{36} (2004) 321--331.

\bibitem{Ete}
N.~Etemadi,
An elementary proof of the strong law of large numbers,
\textit{Z.\ Wahrsch.\ verw.\ Gebiete} \textbf{55} (1981) 119--122.

\bibitem{Hof}
A.~Hof,
On diffraction by aperiodic structures,
\textit{Commun.\ Math.\ Phys.} \textbf{169} (1995) 25--43.

\bibitem{HB}
M.~H\"{o}ffe and M.~Baake,
Surprises in diffuse scattering,
\textit{Z.\ Krist.} \textbf{215} (2000) 441--444;\newline
\texttt{arXiv:math-ph/0004022}.

\bibitem{K}
G.~Keller,
\textit{Equilibrium States in Ergodic Theory},
LMSST 42, Cambridge University Press, Cambridge (1998).

\bibitem{L}
F.~Ledrappier,
Un champ markovien peut \^etre d'entropie nulle et m\'elangeant,
\textit{C.\ R.\ Acad.\ Sci.\ Paris S\'{e}r.\ A-B},
\textbf{287} (1978) A561--A563.

\bibitem{Lenz}
D.~Lenz,
Continuity of eigenfunctions of uniquely ergodic
dynamical systems and intensity of Bragg peaks,
\textit{Commun.\ Math.\ Phys.} \textbf{287} (2009) 225--258;
\texttt{arXiv:math-ph/0608026}.


\bibitem{R}
W.~Rudin,
Some theorems on Fourier coefficients,
\textit{Proc.\ AMS} \textbf{10} (1959) 855--859.

\bibitem{Ru}
D.J.~Rudolph,
{$\times 2$} and {$\times 3$} invariant measures and entropy,
\textit{Ergodic Th.\ \& Dyman.\ Syst.} \textbf{10} (1990) 395--406.

\bibitem{Schli}
H.P.~Schlickewei,
$S$-unit equations over number fields,
\textit{Invent.\ Math.} \textbf{102} (1990) 95--107.

\bibitem{Martin}
M.~Schlottmann,
Generalised model sets and dynamical systems,
in:\ M.\ Baake and R.V.\ Moody (eds.),
\textit{Directions in Mathematical Quasicrystals},
CRM Monograph Series vol.\ 13, AMS, Providence, RI (2000),
pp.\ 143--159.

\bibitem{Sch1}
K.~Schmidt,
Automorphisms of compact abelian groups and affine varieties,
\textit{Proc.\ London Math.\ Soc.} (3) \textbf{61} (1990) 480--496.

\bibitem{Sch2}
K.~Schmidt,
Invariant measures for certain expansive $\mathbb{Z}^{2}$-actions,
\textit{Israel J.\ Math.} \textbf{90} (1995) 295--300.

\bibitem{SW}
K.~Schmidt and T.~Ward,
Mixing automorphisms of compact groups and a theorem of {S}chlickewei,
\textit{Invent.\ Math.} \textbf{111} (1993) 69--76.

\bibitem{S}
H.~Shapiro,
\textit{Extremal Problems for Polynomials and Power Series},
Masters Thesis, MIT, Boston (1951).

\bibitem{Sl} J.~Slawny, Ergodic properties of equilibrium
    states, \textit{Commun.\ Math.\ Phys.} \textbf{80} (1981)
    477--483.

\bibitem{ME}
A.C.D.~van Enter and J.~Mi\c{e}kisz,
How should one define a (weak) crystal?
\textit{J.\ Stat.\ Phys.} \textbf{66} (1992) 1147--1153.

\bibitem{W}
P.~Walters,
\textit{An Introduction to Ergodic Theory},
reprint, Springer, New York (2000).

\bibitem{Wit}
R.L.~Withers,
Disorder, structured diffuse scattering and
the transmission electron microscope,
\textit{Z.\ Krist.} \textbf{220} (2005) 1027--1034.


\end{thebibliography}
\end{document}